\theoremstyle{lema}
\theoremstyle{proposition}
\theoremstyle{theorem}
\newtheorem{theorem}{Theorem}[section]
\theoremstyle{theorem}
\newtheorem{remark}{Remark}[section]
\theoremstyle{corollary}
\theoremstyle{definition}
\newtheorem{definition}{Definition}[section]
\theoremstyle{example}
\newtheorem{example}{Example}[section]
\providecommand{\keywords}[1]
{
	\small	
	\textbf{\textit{Keywords---}} #1
}
\providecommand{\msc}[1]
{
	\small	
	\textbf{\textit{Mathematics Subject Classification---}} #1
}
\title{Fixed point theorem for generalized Chatterjea type mappings}
\author{Ovidiu Popescu and Cristina Maria Păcurar}
\date{}
\begin{document}
	
	\maketitle

	\begin{abstract}
		We introduce a new type of mappings in metric space which are three-point analogue of the well-known Chatterjea type mappings, and call them generalized Chatterjea type mappings. It is shown that such mappings can be discontinuous as is the case of Chatterjea type mappings and this new class includes the class of Chatterjea type mappings. The fixed point theorem for generalized Chatterjea type mappings is proven.
	\end{abstract}
	
	\keywords{Metric space, fixed point theorem, Chatterjea mappings}
	
	\msc{Primary 47H10; Secondary 47H09}

	\section{Introduction}
	
	Recently, Petrov \cite{Petrov} considered a new type of mappings in metric spaces, which can be characterized as mappings contracting perimeters of triangles, and gave a fixed point theorem for such mappings. He proved that such mappings are continuous, and constructed examples of mappings contracting perimeters of triangle which are not contraction mappings.
	
	\begin{definition}[Petrov \cite{Petrov}]
		Let $(X,d)$ be a metric space with $|X|\geq 3$. We shall say that $T:X\to X$ is a mapping contracting perimeters of triangles on $X$ if there exists $\alpha \in [0,1)$ such that the inequality 
		$$d(Tx,Ty) + d(Ty,Tz) + d(Tz,Tx) \leq \alpha [d(x,y)+d(y,z)+d(z,x)],$$
		holds for all three pairwise distinct points $x,y,x \in X$.
	\end{definition}

	\begin{theorem}[Petrov \cite{Petrov}]
		Let $(X,d)$, $|X|\geq 3$ be a complete metric space and let $T:X\to X$ be a mapping contracting perimeters of triangles on $X$. Then, $T$ has a fixed point if and only if $T$ does not possess periodic points of prime period $2$. The number of fixed points is at most $2$.
	\end{theorem}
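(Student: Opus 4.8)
The plan is to handle the three assertions separately, beginning with the two easy ones. For the bound of at most two fixed points, I would argue by contradiction: if $a,b,c$ were three pairwise distinct fixed points, substituting $x=a$, $y=b$, $z=c$ into the defining inequality and using $Ta=a$, $Tb=b$, $Tc=c$ gives $P \le \alpha P$, where $P = d(a,b)+d(b,c)+d(c,a) > 0$ is the perimeter; since $\alpha < 1$ this is impossible. The same substitution trick settles one implication of the equivalence: if $x^{*}$ is a fixed point and $p$ is a periodic point of prime period $2$, with $Tp = q$, $Tq = p$, $p \ne q$, then $x^{*}, p, q$ are pairwise distinct (as $x^{*}$ is fixed while $p,q$ are not), and plugging them in yields, after using $Tx^{*} = x^{*}$, $Tp=q$, $Tq=p$, that the perimeter of the triangle $x^{*}pq$ is bounded by $\alpha$ times itself, again impossible. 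Hence the existence of a fixed point forbids prime period $2$.

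The substantial direction is the converse: assuming $T$ has no periodic point of prime period $2$, I would produce a fixed point by Picard iteration. Fix $x_{0}$ and set $x_{n} = T^{n} x_{0}$. If $x_{n} = x_{n+1}$ for some $n$ we are done, so assume $x_{n} \ne x_{n+1}$ for every $n$. The key observation is that the three consecutive iterates $x_{n-1}, x_{n}, x_{n+1}$ are then pairwise distinct: indeed $x_{n-1} = x_{n+1}$ would mean $T^{2} x_{n-1} = x_{n-1}$ while $Tx_{n-1} = x_{n} \ne x_{n-1}$, so $x_{n-1}$ would be a periodic point of prime period $2$, contrary to hypothesis. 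Writing $S_{n} = d(x_{n-1},x_{n}) + d(x_{n},x_{n+1}) + d(x_{n-1},x_{n+1})$ for the perimeter of that triangle, I apply the contraction inequality to $x_{n-1}, x_{n}, x_{n+1}$ and note that its left-hand side is exactly $S_{n+1}$; this gives $S_{n+1} \le \alpha S_{n}$, hence $S_{n} \le \alpha^{\,n-1} S_{1}$.

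From $d(x_{n}, x_{n+1}) \le S_{n} \le \alpha^{\,n-1} S_{1}$ and the triangle inequality, $\sum_{n} d(x_{n}, x_{n+1})$ converges geometrically, so $(x_{n})$ is Cauchy and, by completeness, converges to some $x^{*} \in X$. It remains to check $Tx^{*} = x^{*}$, and this is the step I expect to demand the most care, since the contraction controls perimeters rather than single distances and requires three genuinely distinct points. I would first note that $x^{*}$ coincides with at most one iterate: a repetition $x_{i} = x_{j}$ with $i<j$ would force $(x_{n})$ to be periodic from index $i$, and a convergent periodic sequence is eventually constant, contradicting $x_{n} \ne x_{n+1}$. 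Thus for all large $n$ the points $x^{*}, x_{n}, x_{n+1}$ are pairwise distinct, and applying the contraction to them gives $d(Tx^{*}, x_{n+1}) + d(x_{n+1}, x_{n+2}) + d(x_{n+2}, Tx^{*}) \le \alpha\,[\,d(x^{*},x_{n}) + d(x_{n},x_{n+1}) + d(x_{n+1},x^{*})\,]$. Letting $n \to \infty$, the right-hand side tends to $0$ while the left-hand side tends to $2\,d(Tx^{*}, x^{*})$, forcing $d(Tx^{*}, x^{*}) = 0$. Hence $x^{*}$ is a fixed point, which completes the converse and the whole theorem. Alternatively, one could first establish the continuity of $T$ and then conclude $Tx^{*} = \lim_{n} Tx_{n} = \lim_{n} x_{n+1} = x^{*}$ directly.
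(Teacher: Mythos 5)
Your proof is correct, and it follows essentially the same strategy as the paper's treatment: the paper only cites this theorem from Petrov without reproving it, but its own proof of Theorem 2.1 uses exactly your scheme (Picard iteration, pairwise distinctness of consecutive triples via the no-period-2 hypothesis, geometric decay of the perimeters $S_n$, Cauchy/completeness, and passing to the limit in the three-point inequality applied to $x_n, x_{n+1}, x^*$). The only cosmetic difference is that the paper secures distinctness of $x_{n}, x_{n+1}, x^*$ by passing to a subsequence, whereas you show all iterates are distinct outright; both are valid.
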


	Moreover, Petrov and Bisht \cite{Petrov-Kannan} introduced a three point analogue of the Kannan type mappings \cite{Kannan}.
	
	\begin{definition}[Petrov and Bisht \cite{Petrov-Kannan}]
		Let $(X,d)$ be a metric space with $|X|\geq 3$. We shall say that $T:X\to X$ is a generalized Kannan type mapping on $X$ if there exists $\lambda \in [0,\frac23)$ such that the inequality 
		$$d(Tx,Ty) + d(Ty,Tz) + d(Tz,Tx) \leq \lambda [d(x,y)+d(y,z)+d(z,x)],$$
		holds for all three pairwise distinct points $x,y,x \in X$.
	\end{definition}

	It is shown that such mappings can be discontinuous as is the case of Kannan type mappings, and that the two classes of mappings are independent. Also, a fixed point theorem for generalized Kannan type mappings is proved.
	
	\begin{theorem}[Petrov and Bisht \cite{Petrov-Kannan}]
		Let $(X,d)$, $|X|\geq 3$ be a complete metric space and let the mapping $T:X\to X$ satisfy the following two conditions:
		\begin{itemize}
			\item[(i)] $T(Tx) \neq x$ for all $x \in X$ such that $Tx \neq x$;
			\item[(ii)] $T$ is a generalized Kannan type mapping on $X$.
		\end{itemize}
		Then, $T$ has a fixed point. The number of fixed points is at most $2$.
	\end{theorem}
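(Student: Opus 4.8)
The plan is to adapt the Picard iteration argument to the three-point setting, using condition (i) to guarantee the distinctness of consecutive triples so that the generalized Kannan inequality (whose right-hand side is $\lambda$ times $d(x,Tx)+d(y,Ty)+d(z,Tz)$) can be applied. Fix any $x_0\in X$ and set $x_n=T^nx_0$ and $r_n=d(x_n,x_{n+1})$. If $r_N=0$ for some $N$, then $x_N$ is a fixed point and we are done, so assume $r_n>0$ for every $n$. The first step is to check that the three points $x_{n-1},x_n,x_{n+1}$ are pairwise distinct: $x_{n-1}\neq x_n$ and $x_n\neq x_{n+1}$ hold because $r_{n-1},r_n>0$, while $x_{n-1}\neq x_{n+1}$ is exactly what condition (i) yields, since $Tx_{n-1}=x_n\neq x_{n-1}$ forces $T(Tx_{n-1})\neq x_{n-1}$, i.e.\ $x_{n+1}\neq x_{n-1}$.

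Applying the generalized Kannan inequality to $(x_{n-1},x_n,x_{n+1})$ and discarding the nonnegative term $d(x_n,x_{n+2})$ on the left gives the recursion
$$r_n+r_{n+1}\le \frac{\lambda}{1-\lambda}\,r_{n-1},\qquad n\ge 1.$$
Writing $c=\lambda/(1-\lambda)$, the hypothesis $\lambda<\tfrac23$ is equivalent to $c<2$, which is precisely what makes the next step work. I would sum this inequality over $n=1,\dots,N$ and, after telescoping and using that the partial sums $S_N=\sum_{k=0}^N r_k$ are nondecreasing, obtain the uniform bound $S_N\le (2r_0+r_1)/(2-c)$ for all $N$. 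Hence $\sum_n r_n<\infty$, so $\{x_n\}$ is Cauchy and, by completeness, converges to some $x^*\in X$.

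It remains to show $Tx^*=x^*$, and here the absence of continuity is the main obstacle. I would first observe that $x_n=x^*$ can happen for at most one index $n$: two coincidences would force a tail of $\{x_n\}$ to be periodic, and a convergent periodic sequence is constant, contradicting $r_n>0$. Thus for all large $n$ the triple $(x_n,x_{n+1},x^*)$ is pairwise distinct, and applying the generalized Kannan inequality to it yields
$$r_{n+1}+d(x_{n+2},Tx^*)+d(Tx^*,x_{n+1})\le \lambda\big(r_n+r_{n+1}+d(x^*,Tx^*)\big).$$
Letting $n\to\infty$ and using $x_{n+1},x_{n+2}\to x^*$ and $r_n,r_{n+1}\to 0$, the left side tends to $2\,d(x^*,Tx^*)$ and the right to $\lambda\,d(x^*,Tx^*)$, so $(2-\lambda)\,d(x^*,Tx^*)\le 0$; since $\lambda<2$ this forces $Tx^*=x^*$.

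Finally, for the bound of at most two fixed points I would argue by contradiction: if $p,q,s$ were three pairwise distinct fixed points, applying the inequality to them makes the right-hand side vanish (each of $d(p,Tp),d(q,Tq),d(s,Ts)$ is zero), while the left-hand side equals $d(p,q)+d(q,s)+d(s,p)>0$, a contradiction. I expect the delicate points to be the summation step, where the recursion is not the usual single-ratio contraction and the constant $c$ may exceed $1$, and the passage to the limit for the fixed-point equation, which must circumvent the possible discontinuity of $T$ through the distinctness bookkeeping that condition (i) makes available.
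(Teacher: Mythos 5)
Your argument is correct, but note that the paper does not actually prove this statement: it is quoted from Petrov and Bisht, and the only proof in the paper is of the analogous Theorem 2.1 for generalized Chatterjea mappings. (You also silently repaired a typo: the paper's printed Definition 1.2 accidentally repeats the perimeter-contraction inequality, whereas you correctly took the right-hand side to be $\lambda[d(x,Tx)+d(y,Ty)+d(z,Tz)]$, which is the only reading consistent with $\lambda<\tfrac23$ and with the claimed independence from Petrov's class.) The substantive difference from the paper's template is in the Cauchy step. In Theorem 2.1 the authors bound the full perimeter $d_n=d(x_n,x_{n+1})+d(x_n,x_{n+2})+d(x_{n+1},x_{n+2})$ by $\alpha d_{n-1}$ with $\alpha=\lambda/(1-\lambda)<1$, which works there because $\lambda<\tfrac12$; in the Kannan setting $\lambda$ may lie in $[\tfrac12,\tfrac23)$, so $c=\lambda/(1-\lambda)$ can be $\ge 1$ and no single-ratio geometric contraction is available. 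Your replacement --- summing the two-term recursion $r_n+r_{n+1}\le c\,r_{n-1}$ with $c<2$ to get $(2-c)S_N\le 2r_0+r_1$ and hence $\sum_n r_n<\infty$ --- is exactly the right fix, and it is the genuinely nontrivial part of the proof; the bookkeeping that at most one index satisfies $x_n=x^*$ (via the periodicity argument), the passage to the limit giving $2d(x^*,Tx^*)\le\lambda d(x^*,Tx^*)$, and the three-fixed-point contradiction all match the structure of the paper's own Theorem 2.1 and Remark 2.2. One cosmetic point: condition (i) is needed not only for $x_{n-1}\ne x_{n+1}$ along the orbit but is what makes the whole orbit argument start; you use it correctly, so nothing is missing.
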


	In \cite{Chatterjea}, Chatterjea proved the following result which gave the fixed point for discontinuous mappings:
	
	\begin{theorem}
		Let $(X,d)$ be a complete metric space and let $T :X \to X$ be a mapping such that for every $x,y \in X$ the inequality
		\begin{equation}
			d(Tx,Ty) \leq \lambda [d(x,Ty)+d(y,Tx)],
			\label{Chatterjea}
		\end{equation}
	holds, where $0 \leq \lambda < \frac12$. Then, $T$ has a unique fixed point.
	\end{theorem}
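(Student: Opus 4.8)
The plan is to follow the classical Picard-iteration strategy adapted to the two-point Chatterjea condition. Fix an arbitrary $x_0 \in X$ and define the orbit by $x_{n+1} = Tx_n$. The crux is to show that consecutive iterates contract geometrically; once that is established, a standard geometric-series estimate shows that $(x_n)$ is Cauchy, completeness supplies a limit, and the contractive inequality forces this limit to be fixed.

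First I would apply inequality \eqref{Chatterjea} to the pair $x = x_{n-1}$, $y = x_n$. Its right-hand side becomes $\lambda[d(x_{n-1}, Tx_n) + d(x_n, Tx_{n-1})] = \lambda[d(x_{n-1}, x_{n+1}) + d(x_n, x_n)]$, and the second term vanishes. This is the decisive simplification: the Chatterjea condition, which a priori mixes points with their images, collapses along the orbit to $d(x_n, x_{n+1}) \le \lambda\, d(x_{n-1}, x_{n+1})$. Splitting $d(x_{n-1}, x_{n+1})$ by the triangle inequality and rearranging gives $(1-\lambda)\, d(x_n, x_{n+1}) \le \lambda\, d(x_{n-1}, x_n)$. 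Here the hypothesis $\lambda < \tfrac12$ enters essentially: it guarantees that $k := \lambda/(1-\lambda) \in [0,1)$, so that $d(x_n, x_{n+1}) \le k\, d(x_{n-1}, x_n)$ and, inductively, $d(x_n, x_{n+1}) \le k^{n}\, d(x_0, x_1)$.

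With geometric decay in hand, for $m > n$ I would bound $d(x_n, x_m)$ by $\sum_{i=n}^{m-1} d(x_i, x_{i+1}) \le d(x_0, x_1)\, k^{n}/(1-k)$, which tends to $0$; hence $(x_n)$ is Cauchy and, by completeness, converges to some $x^* \in X$. To see that $x^*$ is fixed, apply \eqref{Chatterjea} to $x = x^*$, $y = x_n$, obtaining $d(Tx^*, x_{n+1}) \le \lambda[d(x^*, x_{n+1}) + d(x_n, Tx^*)]$; letting $n \to \infty$ and using $x_n, x_{n+1} \to x^*$ yields $d(Tx^*, x^*) \le \lambda\, d(x^*, Tx^*)$, which forces $Tx^* = x^*$ since $\lambda < 1$. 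Finally, for uniqueness, if $x^*$ and $y^*$ are both fixed points, then \eqref{Chatterjea} gives $d(x^*, y^*) \le 2\lambda\, d(x^*, y^*)$, and $2\lambda < 1$ forces $x^* = y^*$.

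The main obstacle is the derivation of the contraction factor $k = \lambda/(1-\lambda)$: unlike in the Banach case, contractivity along the orbit is not assumed directly but must be manufactured from the cross-term structure of \eqref{Chatterjea}, relying on the cancellation $d(x_n, x_n) = 0$ and on the triangle inequality to re-express $d(x_{n-1}, x_{n+1})$. Everything else is routine, but one should verify carefully that $\lambda < \tfrac12$ (rather than merely $\lambda < 1$) is exactly what keeps $k$ below $1$, since this is where the particular threshold in the hypothesis is consumed.
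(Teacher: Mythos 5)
Your proof is correct and is the standard argument for Chatterjea's theorem: collapse the cross terms along the Picard orbit via $d(x_n,x_n)=0$, use the triangle inequality to extract the contraction factor $k=\lambda/(1-\lambda)<1$, and conclude by completeness, with the fixed-point and uniqueness steps following directly from \eqref{Chatterjea}. The paper only cites this classical result without proof, but your argument is precisely the two-point version of the scheme the authors use for their generalized Theorem 2.1, so there is nothing to flag.
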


	We note that the fixed point theorems due to Banach \cite{Banach}, Kannan \cite{Kannan} and Chatterjea \cite{Chatterjea} are independent, and that the latter two characterize the completeness of the metric space (see Subrahmanyam \cite{Subrahmanyam}). The initial results proved by Chatterjea were of great interest, and many novel directions of research and generalizations emerged from the original result (see, for example, the papers \cite{Berinde,Dung,Fallahi,Faraji,Gautam,Imdad,Kanwal,Karapinar,Khan,Mukheimer,Mustafa,Popescu-1,Popescu-2,Som}). 

	In this paper, we give a three-point analogue of the Chatterjea type mapping. The ordinary Chatterjea mappings form an important subclass of these novel mappings. To emphasize the advances brought to the research field and the comprehensiveness of the newly introduced class of mappings, examples of generalized Chatterjea mappings, which are not Chatterjea mappings are constructed.

	\section{Generalized Chatterjea type mappings}

	\begin{definition}
		Let $(X,d)$ be a metric space with $|X|\geq 3$. We shall say that $T:X\to X$ is a generalized Chatterjea type mapping on $X$ if there exists $\lambda \in [0,\frac12)$ such that the inequality 
		\begin{equation}
			\begin{aligned}
			d(Tx,Ty) &+ d(Ty,Tz) + d(Tz,Tx) \leq \\ &\leq \lambda [d(x,Ty)+d(y,Tx)+d(y,Tz)+d(z,Tx)+d(z,Ty)+d(x,Tz)],
			\label{Chatterjea-general}
			\end{aligned}
		\end{equation}
		holds for all three pairwise distinct points $x,y,x \in X$.
	\end{definition}
	
	\begin{remark}
		Let $(X,d)$ be a metric space with $|X|\geq 3$, $T:X\to X$ be a generalized Chatterjea type mapping on $X$ and let $x,y,z \in X$ be pairwise distinct. Consider inequality (\ref{Chatterjea}) for the pairs $x,z$ and $y,z$:
		\begin{itemize}
		\item[]\begin{equation}
			d(Tx,Tz) \leq \lambda [d(x,Tz)+d(z,Tx)],
			\label{Chatterjea-xz}
		\end{equation}
		\item[]\begin{equation}
			d(Ty,Tz) \leq \lambda [d(y,Tz)+d(z,Ty)].
			\label{Chatterjea-yz}
		\end{equation}
		\end{itemize}
	
		Adding the left and the right parts of the inequalities (\ref{Chatterjea}), (\ref{Chatterjea-xz}) and (\ref{Chatterjea-yz}) we obtain (\ref{Chatterjea-general}). Hence, we get that every Chatterjea type mapping is a generalized Chatterjea type mapping.
	\end{remark}

	\begin{example}
		Let $X=\{x,y,z\}$, $d(x,y) = d(y,z) = d(z,x) = 1$ and let $T : X \to X$ be such that $Tx=x$, $Ty=y$ and $Tz=x$. 
		
		Since $d(Tx,Ty) = 1$ and $d(x,Ty)+d(y,Tx) = 2,$ $T$ is not a Chatterjea type mapping. 
		
		However,
		$$M(x,y,z) = d(Tx,Ty) + d(Ty,Tz) + d(Tz,Tx) = 2$$
		and 
		$$N(x,y,z) = d(x,Ty)+d(y,Tx)+d(y,Tz)+d(z,Tx)+d(z,Ty)+d(x,Tz) = 5,$$
		so we have 
		$$  M(x,y,z) \leq \frac25 N(x,y,z).$$
		Therefore, $T$ is a generalized Chatterjea type mapping ($\lambda = \frac25$). We note that in this case, $T$ has two fixed points.
	\end{example}

	The next theorem is the main result of the current paper which ensures that every generalized Chatterjea type mapping that does not have periodic points of period two, has fixed points, and the number of fixed points is at most two.

	\begin{theorem}
		Let $(X,d)$ be a complete metric space with  $|X|\geq 3$ and let the mapping $T:X\to X$ satisfy the following two conditions:
		\begin{itemize}
			\item[(i)] $T(Tx) \neq x$ for all $x \in X$ such that $Tx \neq x$;
			\item[(ii)] $T$ is a generalized Chatterjea type mapping on $X$.
		\end{itemize}
		Then, $T$ has a fixed point. The number of fixed points is at most two.
	\end{theorem}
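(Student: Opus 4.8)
The plan is to mimic the Banach-style iteration argument but adapted to the three-point condition. Fix any $x_0 \in X$ and define the Picard iterates $x_{n+1} = Tx_n$. First I would dispose of trivial cases: if some $x_n = x_{n+1}$ we have a fixed point and are done, so I may assume $x_n \neq x_{n+1}$ for all $n$; condition (i) then guarantees $x_{n+2} \neq x_n$ as well, so that among consecutive iterates we can always find three pairwise distinct points to feed into inequality (\ref{Chatterjea-general}). The key estimate is to apply (\ref{Chatterjea-general}) to the triple $x_{n-1}, x_n, x_{n+1}$ (after checking these are pairwise distinct) and, using $Tx_{n-1}=x_n$, $Tx_n=x_{n+1}$, $Tx_{n+1}=x_{n+2}$, rewrite the right-hand side in terms of the distances $a_n := d(x_n,x_{n+1})$. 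Many of the six terms on the right collapse: distances like $d(x_{n-1},Tx_n)=d(x_{n-1},x_{n+1})$ and $d(x_{n+1},Tx_{n-1})=d(x_{n+1},x_n)=a_n$ appear, and I would bound the mixed terms by the triangle inequality $d(x_{n-1},x_{n+1}) \le a_{n-1}+a_n$.

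The heart of the argument is to derive from this a geometric decay $a_{n+1} \le k\, a_n$ (or a bound on $a_n + a_{n+1}$) with $k<1$. Collecting terms, the left-hand side $d(x_{n-1},x_n)+d(x_n,x_{n+1})+d(x_{n+1},x_{n-1})$ — or more usefully the perimeter-type quantity involving $a_{n-1}, a_n, a_{n+1}$ — should be controlled by $\lambda$ times a sum dominated by $a_{n-1}+a_n+a_{n+1}$ plus cross terms. The main obstacle will be the bookkeeping: showing that after substituting and applying the triangle inequality, the resulting linear inequality in the $a_j$ can be solved to yield $a_{n+1} \le k a_n$ with an explicit $k = k(\lambda) < 1$ for all $\lambda \in [0,\tfrac12)$. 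I expect a relation of the form $a_{n+1} \le \tfrac{\lambda}{1-\lambda} a_n$ or similar, and I would verify $\tfrac{\lambda}{1-\lambda} < 1$ exactly when $\lambda < \tfrac12$, which is precisely the stated range — a reassuring sign that this is the intended mechanism.

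Once geometric decay is established, the rest is routine: the $a_n$ form a summable sequence, so $(x_n)$ is Cauchy, and by completeness $x_n \to x^*$. To show $x^*$ is fixed I would again invoke (\ref{Chatterjea-general}) on a triple involving $x^*$, $x_n$, $x_{n+1}$ (for large $n$, these are distinct) and pass to the limit, using that $d(x_n, x^*)\to 0$ to force $d(Tx^*, x^*)=0$; here some care is needed since the inequality is only assumed for pairwise distinct points, so I may need to argue by a limiting/contradiction scheme if $x^*$ coincides with iterates. For the bound on the number of fixed points, suppose $p, q, r$ were three distinct fixed points; then $Tp=p$, $Tq=q$, $Tr=r$ turn (\ref{Chatterjea-general}) into $d(p,q)+d(q,r)+d(r,p) \le \lambda\cdot 2[d(p,q)+d(q,r)+d(r,p)]$, and since $2\lambda < 1$ this forces the perimeter to vanish, contradicting distinctness — hence at most two fixed points.
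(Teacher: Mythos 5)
Your plan is essentially the paper's proof: Picard iteration with the triple $x_{n-1},x_n,x_{n+1}$ fed into (\ref{Chatterjea-general}), a triangle-inequality reduction to geometric decay, a Cauchy/completeness argument, a subsequence of pairwise distinct triples $x_{n(k)},x_{n(k)+1},x^*$ to pass to the limit, and the same perimeter contradiction ruling out three fixed points. The one adjustment you will need in the bookkeeping is that the clean ratio $\tfrac{\lambda}{1-\lambda}<1$ falls out for the three-point quantity $d_n=d(x_n,x_{n+1})+d(x_n,x_{n+2})+d(x_{n+1},x_{n+2})$, giving $d_n\le\tfrac{\lambda}{1-\lambda}\,d_{n-1}$, rather than a one-step recurrence for the single gap $d(x_n,x_{n+1})$ alone (which the $n-1$ terms on the right-hand side prevent); this still yields summability and the Cauchy property exactly as you intend.
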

	\begin{proof}
		Let $x_0 \in X$, $x_1=Tx_0$, $x_2=Tx_1$, $\dots$, $x_{n+1}=Tx_n$. Suppose that $x_n$ is not a fixed point of the mapping $T$ for every $n = 0,1,\dots$. Then, we have $x_{n} = Tx_{n-1}\neq x_{n-1}$ and $x_{n+1} = T(Tx_{n-1}) \neq x_{n-1}$ for every $n=1,2,\dots$. Hence, $x_{n-1}$, $x_n$ and $x_{n+1}$ are pairwise distinct. Taking in (\ref{Chatterjea-general}) $x=x_{n-1}$, $y=x_n$, $z=x_{n+1}$ we obtain
		\begin{equation*}
			\begin{aligned}
				d(Tx_{n-1},Tx_{n}) &+ d(Tx_{n},Tx_{n+1}) + d(Tx_{n+1},Tx_{n-1}) \leq \\ &\leq \lambda [d(x_{n-1},Tx_{n})+d(x_{n},Tx_{n-1})+d(x_{n},Tx_{n+1})+\\ &+d(x_{n+1},Tx_{n-1})+d(x_{n+1},Tx_{n})+d(x_{n-1},Tx_{n+1})],
			\end{aligned}
		\end{equation*}
		by where we get 
		\begin{equation*}
			\begin{aligned}
				d(x_{n},x_{n+1}) &+ d(x_{n+1},x_{n+2}) + d(x_{n},x_{n+2}) \leq \\ &\leq \lambda [d(x_{n-1},x_{n+1})+d(x_{n},x_{n+2})+d(x_{n},x_{n+1})+d(x_{n-1},x_{n+2})].
			\end{aligned}
		\end{equation*}
		
		Hence, we have
		\begin{equation*}
			\begin{aligned}
				(1-\lambda)[d(x_{n},x_{n+1}) &+ d(x_{n+1},x_{n+2}) + d(x_{n},x_{n+2})] \leq \\ &\leq \lambda [d(x_{n-1},x_{n+1})+d(x_{n-1},x_{n+2})].
			\end{aligned}
		\end{equation*}
	
		Using the triangle inequality
		$$d(x_{n-1},x_{n+2}) \leq d(x_{n-1},x_{n}) + d(x_{n},x_{n+1}) + d(x_{n+1},x_{n+2}),$$
		we get
		\begin{equation*}
			\begin{aligned}
				(1-\lambda)[d(x_{n},x_{n+1}) &+ d(x_{n+1},x_{n+2}) + d(x_{n},x_{n+2})] \leq \\ &\leq \lambda [d(x_{n-1},x_{n+1})+d(x_{n-1},x_{n})+d(x_n,x_{n+1})],
			\end{aligned}
		\end{equation*}
		by where
		\begin{equation*}
			\begin{aligned} d(x_{n},x_{n+1})&+d(x_{n},x_{n+2})+d(x_{n+1},x_{n+2}) \leq \\ &\leq \dfrac{\lambda}{1-\lambda}[d(x_{n-1},x_{n+1})+d(x_{n-1},x_{n})+d(x_n,x_{n+1})].
			\end{aligned}
		\end{equation*}
		
		Further, set for every $n=0,1,\dots$
		$$d_n= d(x_{n},x_{n+1})+d(x_{n},x_{n+2})+d(x_{n+1},x_{n+2}).$$
		
		Then, we have $d_n \leq \alpha d_{n-1}$ for every $n = 1,2,\dots$, where $\alpha = \dfrac{\lambda}{1-\lambda} \in [0,1)$. Hence, we get 
		$$d_n \leq \alpha d_{n-1} \leq \alpha^2 d_{n-2} \leq \dots \leq \alpha^n d_0.$$
		
		Like in the proof of Theorem 2.4 from \cite{Petrov} we obtain that $\{x_n\}$ is a Cauchy sequence. By completeness of $(X,d)$ we get that $\{x_n\}$ has a limit $x^* \in X$. Let us prove that $Tx^* = x^*$.
		
		Since $x_{n}$, $x_{n+1}$ and $x_{n+2}$ are pairwise distinct for every $n = 0,1,\dots$, there exists a subsequence $\{x_{n(k)}\}_{k \geq 0}$ such that $x_{n(k)}$, $x_{n(k) +1}$ and $x^*$ are pairwise distinct for every $k =0,1,\dots$. Taking in (\ref{Chatterjea-general}) $x=x_{n(k)}$, $y=x_{n(k)+1}$ and $z=x^*$, we obtain 
		\begin{equation*}
			\begin{aligned}
				d(Tx_{n(k)},Tx_{n(k)+1}) &+ d(Tx_{n(k)+1},Tx^*) + d(Tx^*,Tx_{n(k)}) \leq \\ &\leq \lambda [d(x_{n(k)},Tx_{n(k)+1})+d(x_{n(k)+1},Tx_{n(k)})+d(x_{n(k)+1},Tx^*)+\\ &+d(x^*,Tx_{n(k)})+d(x^*,Tx_{n(k)+1})+d(x_{n(k)},Tx^*)].
			\end{aligned}
		\end{equation*}
		Hence,
		\begin{equation*}
			\begin{aligned}
				d(x_{n(k)+1},x_{n(k)+2}) &+ d(x_{n(k)+1},Tx^*) + d(x_{n(k)+2},Tx^*) \leq \\ &\leq \lambda [d(x_{n(k)},x_{n(k)+2})+d(x_{n(k)+1},Tx^*)+d(x^*,x_{n(k)+1})+\\&+d(x^*,x_{n(k)+2})+d(x_{n(k)},Tx^*)].
			\end{aligned}
		\end{equation*}
		
		Taking the limit as $k \to \infty$ we get 
		$$2d(x^*,Tx^*) \leq 2\lambda d(x^*,Tx^*),$$
		by where $d(x^*,Tx^*)=0$, so $x^*$ is a fixed point of $T$.

		Now, suppose that there exist at least three distinct fixed points $x,y$ and $z$. Then $Tx=x$, $Ty=y$ and $Tz=z$. By (\ref{Chatterjea-general}) we have 
		$$d(x,y) + d(y,z) + d(z,x) \leq 2 \lambda[d(x,y)+d(y,z)+d(z,x)],$$
		which is a contradiction.
	\end{proof}

	\begin{remark}
		Suppose that under the assumption of Theorem 2.1, the mapping $T$ has a fixed point $x^*$ which is a limit of some iteration sequence $x_0$, $x_1=Tx_0$, $x_2=Tx_1$ $\dots$ such that $x_n \neq x^*$ for all $n = 1,2,\dots$. Then, $x^*$ is a unique fixed point. 
		
		Indeed, suppose that $T$ has another fixed point $x^{**} \neq x^*$. Obviously, there exists $N \geq 1$ such that $x_n \neq x^*$ for all $n\geq N$. Taking in (\ref{Chatterjea-general}) $x=x_n$, $y=x^*$ and $z=x^{**}$ we obtain
		\begin{equation*}
			\begin{aligned}
				d(Tx_n,Tx^*) &+ d(Tx^*,Tx^{**}) + d(Tx^{**},Tx_n) \leq \\ &\leq \lambda [d(x_n,Tx^*)+d(x^*,Tx_n)+d(x^*,Tx^{**})+\\&+d(x^{**},Tx_n)+d(x^{**},Tx^*)+d(x_n,Tx^{**})],
			\end{aligned}
		\end{equation*}
		by where 
		\begin{equation*}
			\begin{aligned}
				d(x_{n+1},x^*) &+ d(x^*,x^{**}) + d(x_{n+1},x^{**}) \leq \\ &\leq \lambda [d(x_{n},x^*)+d(x_{n+1},x^*)+ d(x_{n},x^{**})+ d(x_{n+1},x^{**})+2d(x^*,x^{**})].
			\end{aligned}
		\end{equation*}
		
		Taking the limit as $n \to \infty$, we get
		$$2d(x^*,x^{**}) \leq 4\lambda d(x^*,x^{**}),$$
		so $d(x^*,x^{**}) = 0$, which is a contradiction.
	\end{remark}

	We present two examples of generalized Chatterjea type mappings, which are not a Chatterjea type mappings, neither mappings contracting perimeters of triangles, nor generalized Kannan mappings, nor Kannan mappings. These examples highlight the contribution to the theory of fixed point theorems brought by Theorem 2.1.

	\begin{example}
		Let $X= \{A,B,C,D,E\}$ and, as in Figure \ref{Ex}, let  $$d(A,B) = d(A,C) = d(B,C)=d(B,D)=d(C,E)= d(D,E)= d(D,F)=d(E,F)=1,$$ $$d(A,D)= d(A,E)=d(B,E)= d(C,D) = d(B,F) = d(C,F) = 2,$$
		and $d(A,F) = 3$. 
		
		\begin{figure}[h!]
			\centering
			\includegraphics[width=14cm]{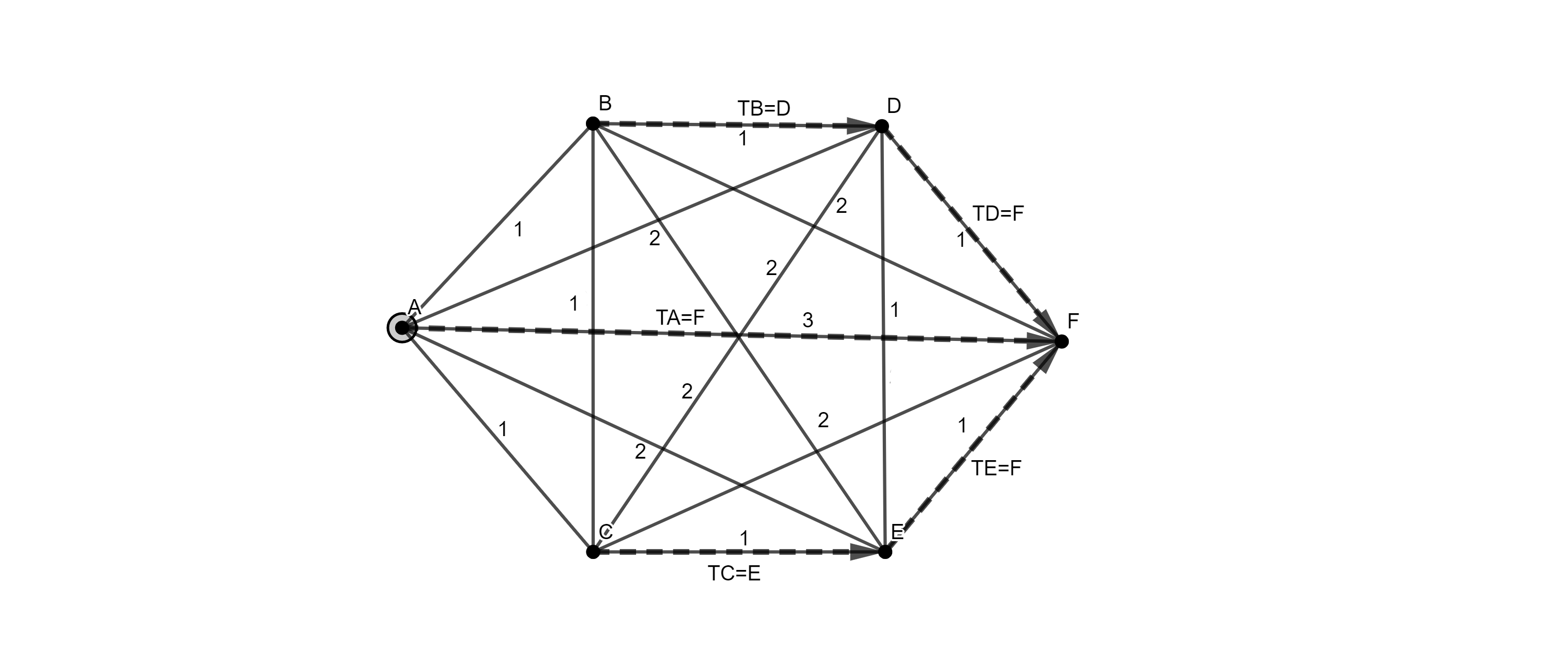}
			\caption{A generalized Chatterjea mapping}
			\label{Ex}
		\end{figure}
		
		Let $T: X \to X$ be such that 
		$TA=TD=TE=TF=F$, $TB=D$ and $TC=E$.

		We have
		\begin{equation*}
				M(A,B,C) = M(B,C,D) = M(B,C,E) = M(B,C,F) = 3,
		\end{equation*}
		\begin{equation*}
			\begin{aligned}
			& \; \; M(A,B,D) =  M(A,B,F)= M(A,B,E) = M(A,C,D)  =  \\ & = M(A,C,E)= M(A,C,F) = M(B,D,E) = M(B,D,F) =  \\ & M(B,E,F)= M(C,D,E) = M(C,D,F) = M(C,E,F) = 2,
			\end{aligned}
		\end{equation*}
		\smallskip
		\begin{equation*}
			M(A,D,E) = M(A,D,F) = M(A,E,F) = M(D,E,F) = 0,
		\end{equation*}
		and
		\begin{equation*}
			N(A,B,C) = 12,\quad 
		\end{equation*}
		\begin{equation*}
			 N(A,B,E) = N(A,C,D)= 11,
		\end{equation*}
		\begin{equation*}  
			N(A,B,D) = N(A,B,F) = N(A,C,E) = N(A,C,F) = N(B,C,F) = 10,
		\end{equation*}
		\begin{equation*}
			N(B,C,D) = N(B,C,E) =  9,
		\end{equation*}
		\begin{equation*} 
			N(B,D,E) = N(B,E,F) = N(C,D,E) = N(C,D,F) = 7, 
		\end{equation*}
		\begin{equation*}
			 N(B,D,F) = N(C,E,F) =6.
		\end{equation*}
	
		We note that $$M(x,y,z) \leq \dfrac13 N(x,y,z)$$ for all three distinct points $x,y,z \in X$, so $T$ is a generalized Chatterjea type mapping.
		
		Since 
		\begin{equation*}
			d(TB,TD) = d(D,F) = 1,
		\end{equation*}
		and 
		\begin{equation*}
			d(B,TD) + d(D,TB) = d(B,F) + d(D,D) = 2,
		\end{equation*}
	 	$T$ is not a Chatterjea type mapping.
	 	
		We note that $T$ has a unique fixed point.
		 Moreover, 
		\begin{equation*}
			d(TA,TB) +d(TA,TC) + d(TB,TC) =3
		\end{equation*} 
		and
		\begin{equation*}
			d(A,B) +d(A,C) + d(B,C) =3,
		\end{equation*} 
		so $T$ is not a mapping contracting perimeters of triangles.
		
		Since
		\begin{equation*}
			d(TB,TC) +d(TB,TD) + d(TC,TD) =3
		\end{equation*} 
		and
		\begin{equation*}
			d(B,TB) +d(C,TC) + d(D,TD) =3,
		\end{equation*} 
		$T$ is not a generalized Kannan type mapping.
		
		Also, $d(TB,TC) = 1$ and $d(B,TB)+d(C,TC)=2$, so $T$ is not a Kannan mapping.
	\end{example}
	
	\begin{example}
		Let $X=\mathbb{R}$, $d(x,y) = |x-y|$ and $T:X \to X$ defined as
		$$	Tx= \begin{cases}
		0, \quad x<2\\
		1, \quad x\geq 2.
		\end{cases}
		$$ 
		
		If $x,y,z$ are pairwise distinct and $x<2$, $y<2$, $z<2$, then $M(x,y,z) = 0$. 
		
		If $x,y,z$ are pairwise distinct and $x\geq 2$, $y\geq 2$, $z\geq 2$, then $M(x,y,z) =0$.
		
		For $x<y<2\leq z$, we have $M(x,y,z) = 2$ and $N(x,y,z) = |x| +|y|+|x-1|+z+|y-1|+z \geq 2z+2 \geq 6$, so 
		$$M(x,y,z) \leq \dfrac13 N(x,y,z).$$
		
		For $x<2\leq y < z$, we have $M(x,y,z) = 2$ and $N(x,y,z) = |x-1|+y+|x-1|+z+y-1+z-1 \geq 2y+2z-2 > 6$, so 
		$$M(x,y,z) \leq \dfrac13 N(x,y,z).$$
		
		Hence, $T$ is a generalized Chatterjea type mapping.
		
		We note that $T$ has a unique fixed point.
		
		Since
		\begin{equation*}
			M(1.9,2,2.1) = 2
		\end{equation*}	
		and
		\begin{equation*}
			d(1.9,2)+d(1.9,2.1)+d(2,2.1) =0.4,
		\end{equation*}
		we get that $T$ is not a mapping contracting perimeters of triangles.
		
		Since 
		\begin{equation*}
			M(0,1,2) = 2
		\end{equation*}	
		and
		\begin{equation*}
			d(0,T0)+d(1,T1)+d(2,T2) =2,
		\end{equation*}
		we obtain that $T$ is not a generalized Kannan  mapping.
		
		Also, since $d(T1,T2) = 1$ and $d(1,T1)+ d(2,T2) = 2$, $T$ is not a Kannan mapping. 
		
		Moreover, 
		$$d(1,T2) + d(2,T1) = 2,$$ so $T$ is not a Chatterjea mapping.
		
		We note that $T$ is a discontinuous mapping.
	\end{example}

	\medskip
	\vspace{1.2ex}
	
\end{document}